\begin{document}

\title*{\texttt{SpecSolve}: Spectral methods for spectral measures}
\author{Matthew J. Colbrook, Andrew Horning}
\institute{Matthew J. Colbrook \at Department of Applied Mathematics and Theoretical Physics, University of Cambridge,
Cambridge, CB3 0WA and Centre Sciences des Données, Ecole Normale Supérieure, 45 rue d'Ulm, 75005 Paris.  \email{m.colbrook@damtp.cam.ac.uk}
\and Andrew Horning \at Department of Mathematics, Massachusetts Institute of Technology, 182 Memorial Dr, Cambridge, MA 02142, United States. \email{horninga@mit.edu}}
\maketitle

\abstract{Self-adjoint operators on infinite-dimensional spaces with continuous spectra are abundant but do not possess a basis of eigenfunctions. Rather, diagonalization is achieved through spectral measures. The \texttt{SpecSolve} package [SIAM Rev., 63(3) (2021), pp. 489--524]  computes spectral measures of general (self-adjoint) differential and integral operators by combining state-of-the-art adaptive spectral methods with an efficient resolvent-based strategy. The algorithm achieves arbitrarily high orders of convergence in terms of a smoothing parameter, allowing computation of both discrete and continuous spectral components. This article extends \texttt{SpecSolve} to two important classes of operators: singular integro-differential operators and general operator pencils. Essential computational steps are performed with off-the-shelf spectral methods, including spectral methods on the real line, the ultraspherical spectral method, Chebyshev and Fourier spectral methods, and the ($hp$-adaptive and sparse) ultraspherical spectral element method. This collection illustrates the power and flexibility of \texttt{SpecSolve}'s ``discretization-oblivious'' paradigm.}

\vspace{2mm}

\textit{Key words:} spectral measures, spectral methods

\textit{2010 Mathematics Subject Classification:} 47A10, 46N40, 47N50, 65N35, 81Q10

\section{Introduction}

Any finite and self-adjoint matrix $A\in\mathbb{C}^{n\times n}$ has an orthonormal basis of eigenfunctions. This basis diagonalizes $A$ by decomposing the space $\mathbb{C}^n$ into a sum of orthogonal eigenspaces. However, many applications require us to study a self-adjoint operator $\mathcal{L}$ with domain $\mathcal{D}(\mathcal{L})\subset\mathcal{H}$ on an \textit{infinite-dimensional} Hilbert space $\mathcal{H}$ with inner product $\langle \cdot,\cdot \rangle$. Even when given a finite matrix $A$, it is often an approximation or discretization of an underlying infinite-dimensional operator. In infinite dimensions, there may not exist a basis of eigenfunctions since $\mathcal{L}$ can have a continuous spectral component. This situation arises in, for example, stochastic processes and signal-processing \cite{kallianpur1971spectral,girardin2003semigroup}~\cite[Ch.~7]{rosenblatt1991stochastic}, scattering in particle physics \cite{efros2007lorentz,efros1994response}, density-of-states in materials \cite{haydock1972electronic,lin2016approximating}, and many other areas \cite{wilkening2015spectral,killip2003sum,MR838253,trogdon2012numerical}.

Instead of eigenfunctions, $\mathcal{L}$ can be diagonalized through spectral measures supported on its spectrum $\Lambda(\mathcal{L})\subset\mathbb{R}$ (see \cref{mcolb_sec:spec_meas} and \cref{mcolb_eqn:cts_decomp}). While efficient methods for computing spectral measures of (even very large) finite matrices exist \cite{lin2016approximating}, the infinite-dimensional case is more subtle. Most existing methods focus on specific operators where analytical formulas are available or perturbations of such cases \cite[Section 3]{colbrook2020}. Recently, \cite{colbrook2020} developed methods for computing spectral measures of general ODEs and integral operators using two ingredients:
\begin{enumerate}
	\item A numerical solver for shifted linear equations $(\mathcal{L}-z)u=f$ for $z\in\mathbb{C}\backslash\Lambda(\mathcal{L})$.
	\item Numerical approximations to inner products of the form $\langle u,f \rangle.$
\end{enumerate}
The software \texttt{SpecSolve} \cite{SpecSolve_code} implements these ingredients using spectral methods.

This article extends \texttt{SpecSolve} to two important classes of operators with continuous spectra: singular integro-differential operators and operator pencils. Leveraging sparse spectral methods for the Hilbert transform on the real line, we compute spectral measures of singular integral operators such as
\begin{equation}\label{mcolb_eqn:SIO}\setlength\abovedisplayskip{6pt}\setlength\belowdisplayskip{6pt}
[\mathcal{L}u](x)=a(x)u(x) + \frac{1}{\pi i}\int_\mathbb{R}\frac{G(x,y)}{y-x}\,u(y)\,dy,
\end{equation}
where $G(x,y)=\overline{G(y,x)}$ and real-valued $a(x)$ satisfy appropriate regularity constraints on $\mathbb{R}$. Differential terms are straightforward to incorporate to tackle a broad class of singular integro-differential operators. We also extend the two-step framework to compute spectral measures associated with the generalized spectral problem $\mathcal{A}v=\lambda \mathcal{B}v,$ for operators $\mathcal{A}$ and $\mathcal{B}$. The two essential computational steps are performed with off-the-shelf spectral methods, illustrating the power and flexibility of \texttt{SpecSolve}'s ``discretization-oblivious" paradigm.

\section{Spectral measures}
\label{mcolb_sec:spec_meas}

The spectral theorem for a finite self-adjoint matrix $A\in \mathbb{C}^{n\times n}$ states that there exists an orthonormal basis of eigenvectors $v_1,\dots,v_n$ for $\mathbb{C}^n$ such that
\begin{equation}\label{mcolb_eqn:disc_decomp}\setlength\abovedisplayskip{6pt}\setlength\belowdisplayskip{6pt}
v = \left(\sum_{k=1}^n v_kv_k^*\right)v, \quad v\in\mathbb{C}^n \qquad\text{and}\qquad Av = \left(\sum_{k=1}^n\lambda_k v_kv_k^*\right)v, \quad v\in\mathbb{C}^n,
\end{equation}
where $\lambda_1,\ldots,\lambda_n$ are eigenvalues of $A$, i.e., $Av_k = \lambda_kv_k$ for $1\leq k\leq n$. In other words, the projections $v_kv_k^*$ decompose $\mathbb{C}^n$ and diagonalize $A$.

Switching to infinite dimensions, associated with the operator $\mathcal{L}$ is a projection-valued measure, $\mathcal{E}$ \cite[Theorem VIII.6]{reed1972methods}, whose support is the spectrum $\Lambda(\mathcal{L})$. The measure $\mathcal{E}$ assigns an orthogonal projector to each Borel subset of $\mathbb{R}$ such that
\begin{equation}\label{mcolb_eqn:cts_decomp}\setlength\abovedisplayskip{6pt}\setlength\belowdisplayskip{6pt}
	f=\left(\int_\mathbb{R} d\mathcal{E}(y)\right)f,\quad f\in\mathcal{H} \qquad\text{and}\qquad \mathcal{L}f=\left(\int_\mathbb{R} y\,d\mathcal{E}(y)\right)f, \quad f\in\mathcal{D}(\mathcal{L}).
\end{equation}
Here, $\mathcal{D}(\mathcal{L})$ denotes the domain of the operator $\mathcal{L}$. Analogous to~\eqref{mcolb_eqn:disc_decomp}, the relations in \eqref{mcolb_eqn:cts_decomp} show how $\mathcal{E}$ decomposes $\mathcal{H}$ and diagonalizes the operator $\mathcal{L}$.

Of particular interest are the (scalar-valued) spectral measures of $\mathcal{L}$ with respect to $f\in\mathcal{H}$, given by $\mu_f(\Omega):=\langle\mathcal{E}(\Omega)f,f\rangle$, for Borel-measurable sets $\Omega\subset\mathbb{R}$. Lebesgue's decomposition of $\mu_f$ is
$$\setlength\abovedisplayskip{6pt}\setlength\belowdisplayskip{6pt}
d\mu_f(y)= \underbrace{\sum_{\lambda\in\Lambda^{{\rm p}}(\mathcal{L})}\langle\mathcal{P}_\lambda f,f\rangle\,\delta({y-\lambda})dy}_{\text{discrete part}}+\underbrace{\rho_f(y)\,dy +d\mu_f^{(\mathrm{sc})}(y)}_{\text{continuous part}}.
$$
The discrete part of $\mu_f$ is a sum of Dirac delta distributions, supported on the set of eigenvalues of $\mathcal{L}$, which we denote by $\Lambda^{{\rm p}}(\mathcal{L})$. The coefficient of each $\delta$ in the sum is $\langle\mathcal{P}_\lambda f,f\rangle=\|\mathcal{P}_\lambda f\|^2$, where $\mathcal{P}_\lambda$ is the orthogonal spectral projector associated with the eigenvalue $\lambda$, and $\|\cdot\|=\sqrt{\langle\cdot,\cdot\rangle}$ is the norm on $\mathcal{H}$. The continuous part of $\mu_f$ consists of an absolutely continuous\footnote{We take ``absolutely continuous'' to be with respect to the Lebesgue measure.} part with Radon--Nikodym derivative $\rho_f\in L^1(\mathbb{R})$ and a singular continuous component $\smash{\mu_f^{(\mathrm{sc})}}$.  Without loss of generality, we assume throughout that $\|f\|=1$, which ensures that $\mu_f$ is a probability measure.

Computing $\mu_f$ is important in many applications, and can be considered an infinite-dimensional analogue of computing eigenvectors. We aim to evaluate smoothed approximations of $\mu_f$. We compute a smooth function $\mu_{f}^\epsilon$, with smoothing parameter $\epsilon>0$, that converges weakly to $\mu_f$~\cite[Ch.~1]{billingsley2013convergence}. That is, 
\begin{equation}\label{mcolb_eqn:weak_conv}\setlength\abovedisplayskip{6pt}\setlength\belowdisplayskip{6pt}
\int_\mathbb{R} \phi(y)\mu_{f}^\epsilon(y)\,dy\rightarrow \int_\mathbb{R}\phi(y)\,d\mu_f(y), \qquad\text{as}\qquad\epsilon\downarrow 0,
\end{equation}
for any bounded, continuous function $\phi$.

\section{Algorithmic framework for \texttt{SpecSolve}}

Our key ingredient is the resolvent $(\mathcal{L}-z)^{-1}=\int_{\Lambda(\mathcal{L})}(\lambda-z)^{-1}d\mathcal{E}(\lambda)$ for $z\not\in\Lambda(\mathcal{L}).$ Stone's formula \cite{stone1932linear} links the resolvent to convolution with the Poisson kernel:
\begin{equation}\label{mcolb_jbjibfvbq}\setlength\abovedisplayskip{6pt}\setlength\belowdisplayskip{6pt}
\mu_{f}^\epsilon(x)=\frac{-1}{\pi}{\rm Im}\left(\langle (\mathcal{L}-(x-\epsilon i))^{-1}f,f \rangle\right)=\int_{\mathbb{R}}{}{\frac{\epsilon\pi^{-1}}{(x-\lambda)^2+\epsilon^2}}\,{d\mu_f(\lambda)}.
\end{equation}
As $\epsilon\downarrow 0$, this approximation converges weakly to $\mu_f$. To compute $(\mathcal{L}-(x-\epsilon i))^{-1}f$ we must somehow discretize the operator. However, for a given discretization size, if $\epsilon$ is too small, the approximation via \eqref{mcolb_jbjibfvbq} becomes unstable \cite[Section 4.3]{colbrook2020} due to the discrete spectrum of the discretization. We must adaptively increase the discretization/truncation size as $\epsilon\downarrow 0$ and there is an increased computational cost for smaller $\epsilon$. Therefore, replacing the Poisson kernel with higher-order rational kernels is advantageous. These kernels have better convergence rates as $\epsilon\downarrow 0$, allowing a larger $\epsilon$ to be used for a given accuracy, and thus a lower computational burden.

Let $\{a_j\}_{j=1}^m$ be distinct points in the upper half plane and suppose that the constants $\{\alpha_j\}_{j=1}^m$ satisfy the following (transposed) Vandermonde system:
\begin{equation}\label{mcolb_eqn:vandermonde_condition}\setlength\abovedisplayskip{6pt}\setlength\belowdisplayskip{6pt}
\begin{pmatrix}
1 & \dots & 1 \\
a_1 & \dots & a_{m} \\
\vdots & \ddots & \vdots \\
a_1^{m-1} &  \dots & a_{m}^{m-1}
\end{pmatrix}
\begin{pmatrix}
\alpha_1 \\ \alpha_2\\ \vdots \\ \alpha_m
\end{pmatrix}
=\begin{pmatrix}
1 \\ 0 \\ \vdots \\0
\end{pmatrix}.
\end{equation}
Then the kernel
$$\setlength\abovedisplayskip{6pt}\setlength\belowdisplayskip{6pt}
{K(x)=\frac{1}{2\pi i}\sum_{j=1}^m\frac{\alpha_j}{x-a_j}-\frac{1}{2\pi i}\sum_{j=1}^m\frac{\overline{\alpha_j}}{x-\overline{a_j}} \quad \text{with }K_\epsilon(x)=\epsilon^{-1}K(x\epsilon^{-1})}
$$
is an $m$th order kernel, and we have the following generalization of Stone's formula
\begin{equation}\setlength\abovedisplayskip{6pt}\setlength\belowdisplayskip{6pt}
\label{mcolb_eqn:egn_stone}
\mu_{f}^\epsilon(x)=[K_{\epsilon}*\mu_f](x)=\frac{-1}{\pi}\sum_{j=1}^{m}{\rm Im}\left(\alpha_j\,\langle (\mathcal{L}-(x-\epsilon a_j))^{-1}f,f \rangle\right).
\end{equation}
This provides $\mathcal{O}(\epsilon^m\log(\epsilon^{-1}))$ convergence in \eqref{mcolb_eqn:weak_conv} if $\phi$ is sufficiently regular, and similar rates for $\mu_{f}^\epsilon(x)\rightarrow\rho_f(x)$ if $\mu_f$ is sufficiently regular near $x$ \cite{colbrook2020}.

\begin{algorithm}[t]
\textbf{Input:} $\mathcal{L}$, $f\in\mathcal{H}$, $x_0\in\mathbb{R}$, $a_1,\dots,a_m\in\{z\in\mathbb{C}:{\rm Im}(z)>0\}$, and $\epsilon>0$. \\
\vspace{-4mm}
\begin{algorithmic}[1]
\STATE Solve the Vandermonde system \eqref{mcolb_eqn:vandermonde_condition} for the residues $\alpha_1,\dots,\alpha_m\in\mathbb{C}$.
	\STATE Solve $(\mathcal{L}-(x_0-\epsilon a_j))u_{j}^\epsilon=f$ for $1\leq j\leq m$.
	\STATE Compute $\smash{\mu_{f}^\epsilon(x_0)=\frac{-1}{\pi}\mathrm{Im}\left(\sum_{j=1}^m\alpha_j\langle u_{j}^\epsilon,f\rangle\right)}$.
\end{algorithmic} \textbf{Output:} The approximate spectral measure $\mu_{f}^\epsilon(x_0)$.
\caption{A computational framework for evaluating an approximate spectral measure of an operator $\mathcal{L}$ at $x_0\in\mathbb{R}$ with respect to a vector $f\in\mathcal{H}$.}\label{mcolb_alg:spec_meas}
\end{algorithm}

We consider the choice $a_j={2j}/({m+1})-1+i$ and the framework for evaluating $\mu_{f}^\epsilon$ is summarized in \Cref{mcolb_alg:spec_meas}. This algorithm forms the foundation of \texttt{SpecSolve} \cite{SpecSolve_code} and can be performed in parallel for several $x_0$. We compute an accurate value of $\mu_{f}^\epsilon$ provided that the resolvent is applied with sufficient accuracy. For an efficient adaptive implementation, \texttt{SpecSolve} constructs a fixed discretization, solves linear systems at each required complex shift, and checks the approximation error at each shift. If further accuracy is needed at a subset of the shifts, then the discretization size is doubled, applied at these shifts, and the error is recomputed. This process is repeated until the resolvent is computed accurately at all shifts.

\section{Singular integro-differential operators}

Singular integral operators of Cauchy type play a pivotal role in the classical theory of PDEs and their spectral properties~\cite{Muskhelishvilli2008}. They appear in a wide range of physical models, along with their integro-differential and nonlinear counterparts~\cite{cuminato2007}. 

Consider the self-adjoint singular integral operator $\mathcal{L}$ in~\eqref{mcolb_eqn:SIO} with $G(x,y)=\overline{G(y,x)}$, and $a(x)$ real, continuously differentiable, and bounded. To compute spectral measures of $\mathcal{L}$ in the \texttt{SpecSolve} framework, we must compute inner products between functions in $L^2(\mathbb{R})$ and solve linear equations with a complex shift $z$, e.g.,
\begin{equation}\label{mcolb_eqn:shift_SIE}\setlength\abovedisplayskip{6pt}\setlength\belowdisplayskip{6pt}
(a(x)-z)u(x) + \frac{1}{\pi i}\int_\mathbb{R}\frac{G(x,y)}{y-x}\,u(y)\,dy = f(x).
\end{equation}
We discretize $L^2(\mathbb{R})$ with the orthogonal rational basis functions $\rho_n(x) = \frac{1}{\sqrt{\pi}}\frac{(1+ix)^n}{(1-ix)^{n+1}}$, for $n\in\mathbb{Z}$. These functions have excellent approximation properties, are associated with banded differentiation and multiplication matrices, and expansion coefficients can be computed from function samples in quasi-linear time with the FFT~\cite{Iserles2020}. Moreover, they diagonalize the Hilbert transform~\cite{Weideman1995} and lead to banded discretizations of~\eqref{mcolb_eqn:shift_SIE} when $G(x,y)$ is sufficiently smooth and of low numerical rank~\cite{Slevinsky2017}.

Both the multiplicative and integral components of $\mathcal{L}$ can contribute continuous spectrum. When $G(x,y)=k(x)k(y)$ is a rank one kernel with $k(x)>0$, the spectrum fills the interval $[\min|a(x)-k(x)|,\max|a(x)+k(x)|]$~\cite{koppelman1960}. \Cref{mcolb_fig:SIOs} (left) shows the spectral measures $\mu_f$ of $\mathcal{L}$, with $\smash{f(x)=\sqrt{2/\pi}(1-x^2)^{-1}}$, $\smash{k(x)=e^{-x^2}}$, and $a_\pm(x)= \pm 2/(1+x^2)^2$. The dashed grey lines highlight the support of the measures in the expected interval. We can also tackle singular integro-differential operators. \Cref{mcolb_fig:SIOs} (right) compares the spectral measures of $-d^2/dx^2$ and $-d^2/dx^2 + (1/\pi i)\int_\mathbb{R}(y-x)^{-1}dy$ with respect to $f$. Both the second derivative and the singular integral are diagonalized by the Fourier transform, and the spectral measures can be computed analytically (dashed lines). The integral perturbation breaks the symmetry between positive and negative Fourier modes, which effectively splits the spectral measure of $-d^2/dx^2$ into two duplicate peaks of half height at $\pm 1$.

\begin{figure}[!tbp]
 \centering
 \begin{minipage}[b]{0.49\textwidth}
  \begin{overpic}[width=\textwidth,trim={0mm 0mm 0mm 0mm},clip]{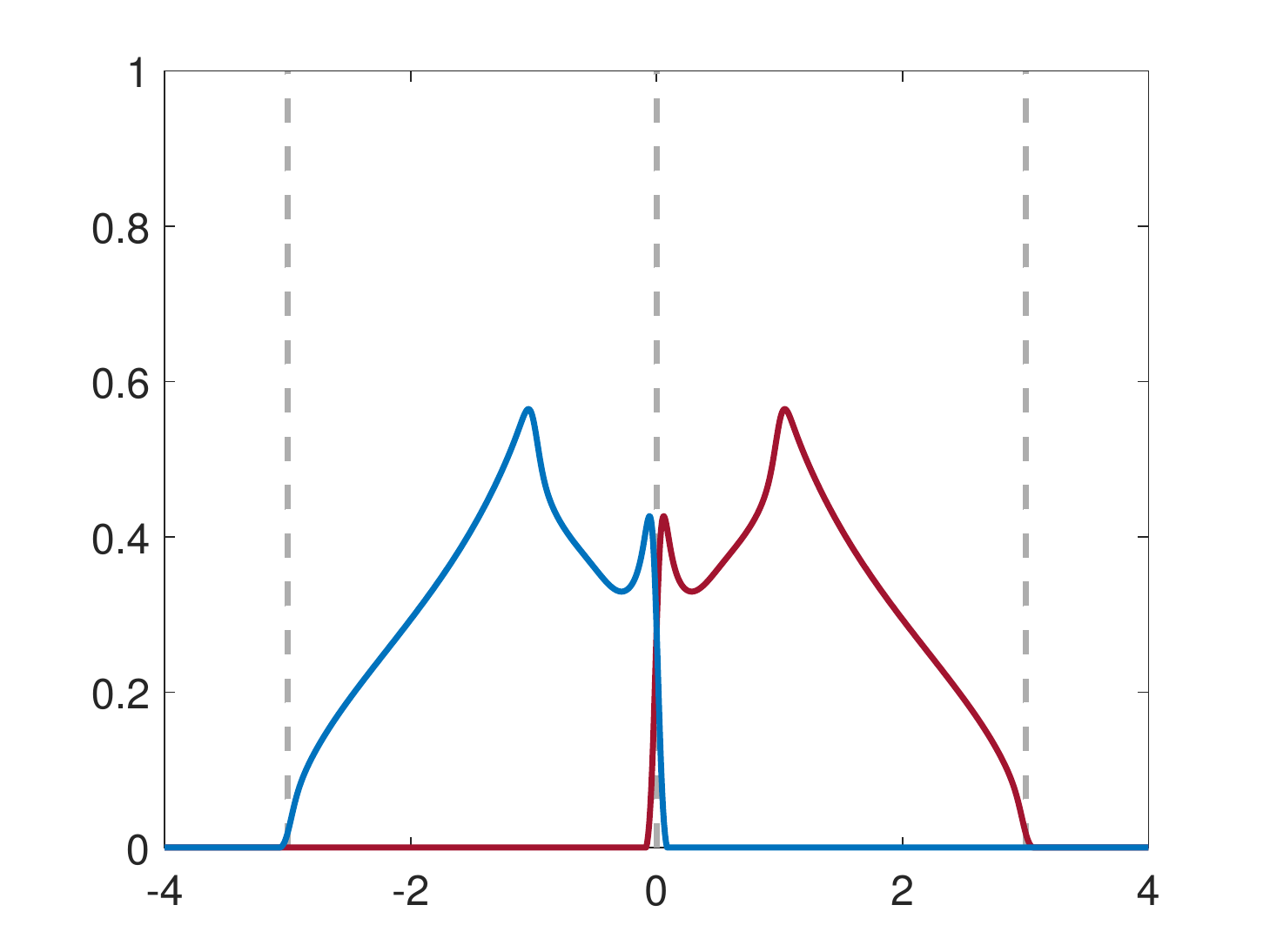}
	\put (45,73) {$\mu_{f}^{0.1}(x)$}
   		\put (50,-1) {$x$}
		\put(27,30){\rotatebox{54}{$a_-(x)$}}
		\put(64,40){\rotatebox{-52}{$a_+(x)$}}
   \end{overpic}
 \end{minipage}
	\begin{minipage}[b]{0.49\textwidth}
  \begin{overpic}[width=\textwidth,trim={0mm 0mm 0mm 0mm},clip]{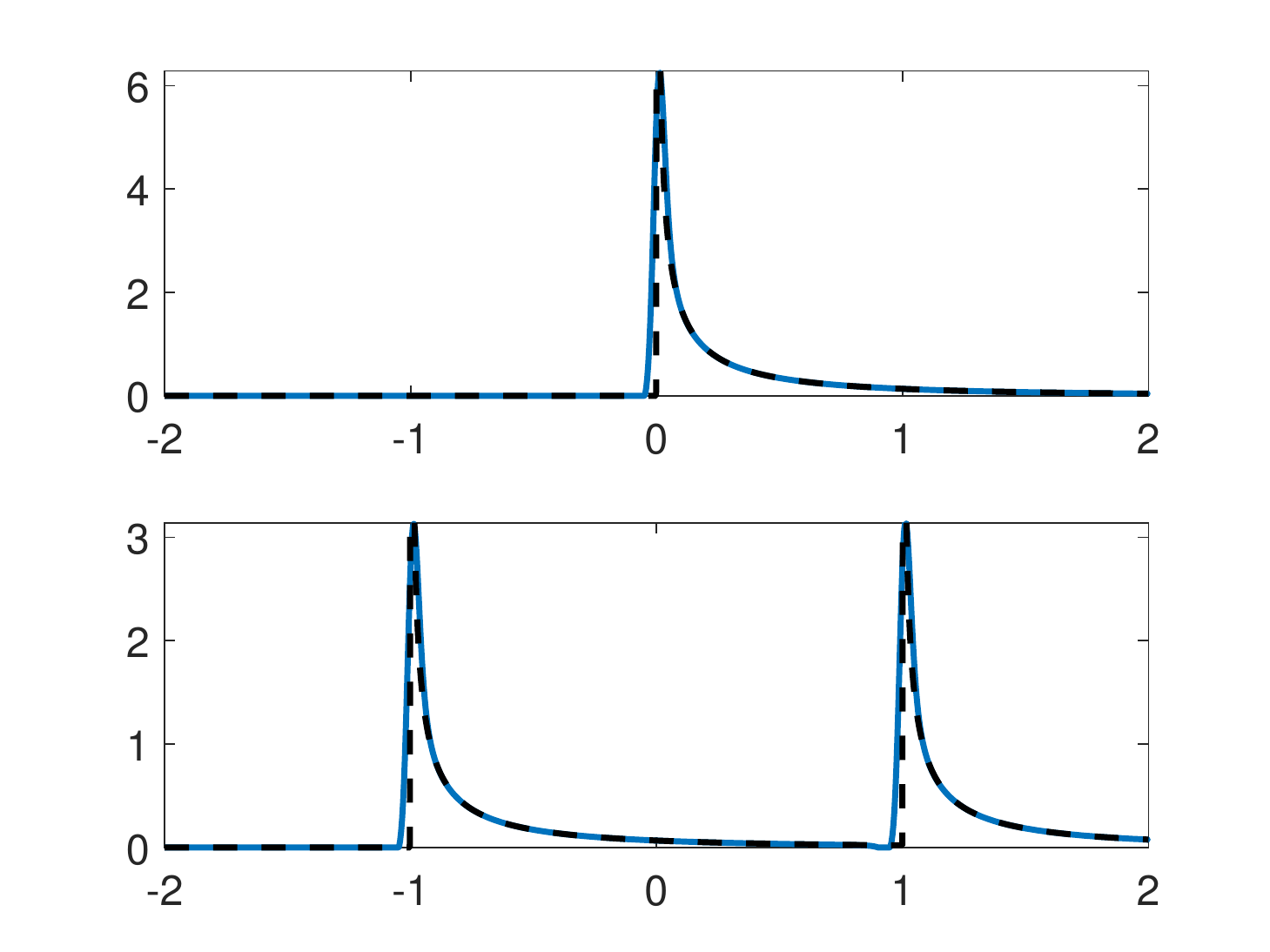}
  		\put (45,73) {$\mu_{f}^{0.05}(x)$}
   		\put (50,-1) {$x$}
   \end{overpic}
 \end{minipage}
	  \caption{Left: The smoothed spectral measures, $\smash{\mu_f^{0.1}}$, computed with a $4$th order kernel are supported on the intervals $\smash{[\min|a(x)-k(x)|,\max|a(x)+k(x)|]}$. Right: The smoothed spectral measures, $\smash{\mu_f^{0.05}}$, computed with a $4$th order kernel for $-d^2/dx^2$ (top) and $\smash{-d^2/dx^2+(1/\pi i)\int_\mathbb{R}(y-x)^{-1}dy}$ (bottom) are compared with analytical solutions (dashed lines).}
\label{mcolb_fig:SIOs}
\end{figure}

\begin{figure}[!tbp]
 \centering
 \begin{minipage}[b]{0.32\textwidth}
  \begin{overpic}[width=\textwidth,trim={0mm 0mm 0mm 0mm},clip]{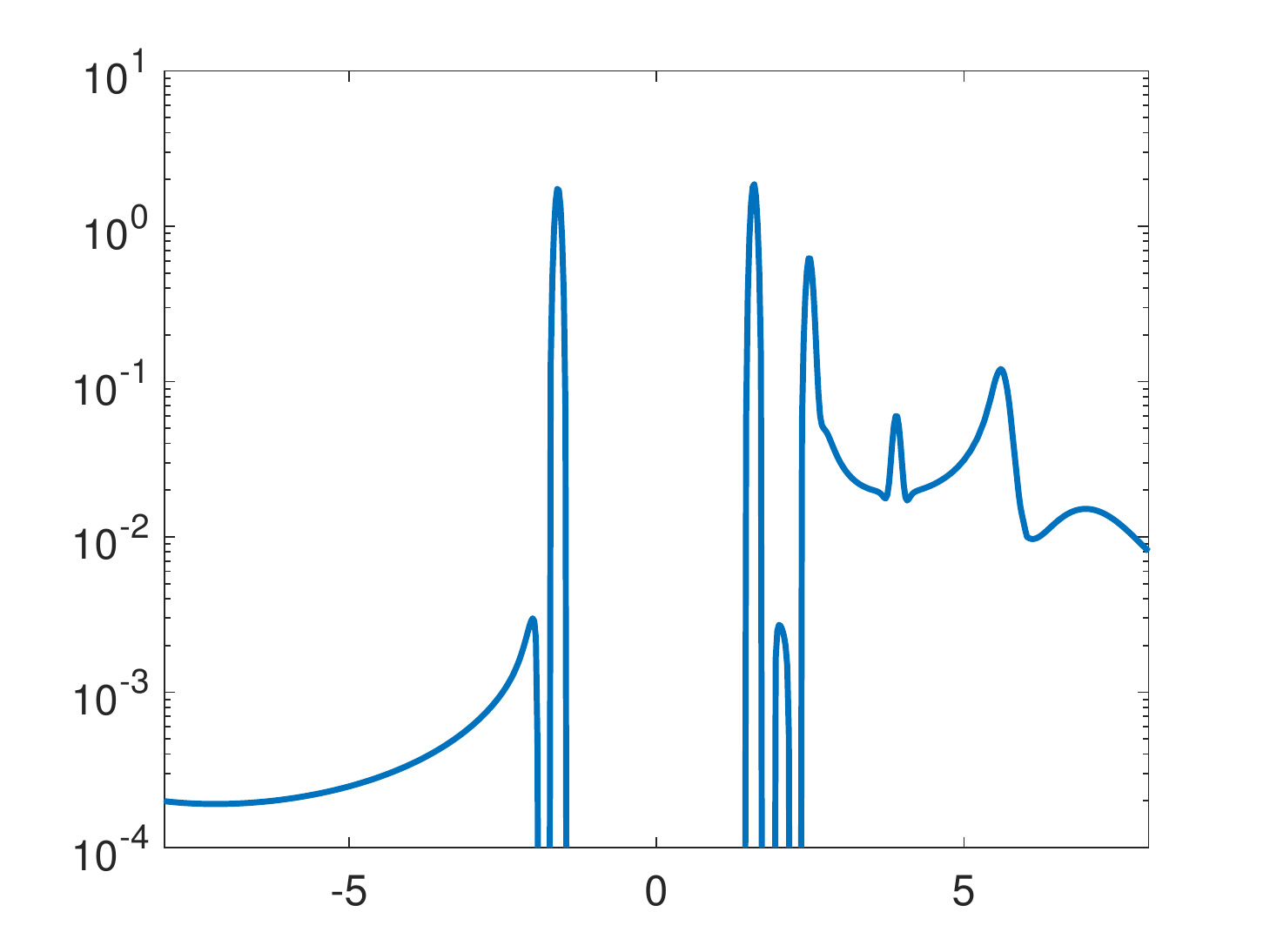}
	\put (45,74) {$\mu_{f}^{0.2}(x)$}
   		\put (50,-1) {$x$}
   \end{overpic}
 \end{minipage}
	\begin{minipage}[b]{0.32\textwidth}
  \begin{overpic}[width=\textwidth,trim={0mm 0mm 0mm 0mm},clip]{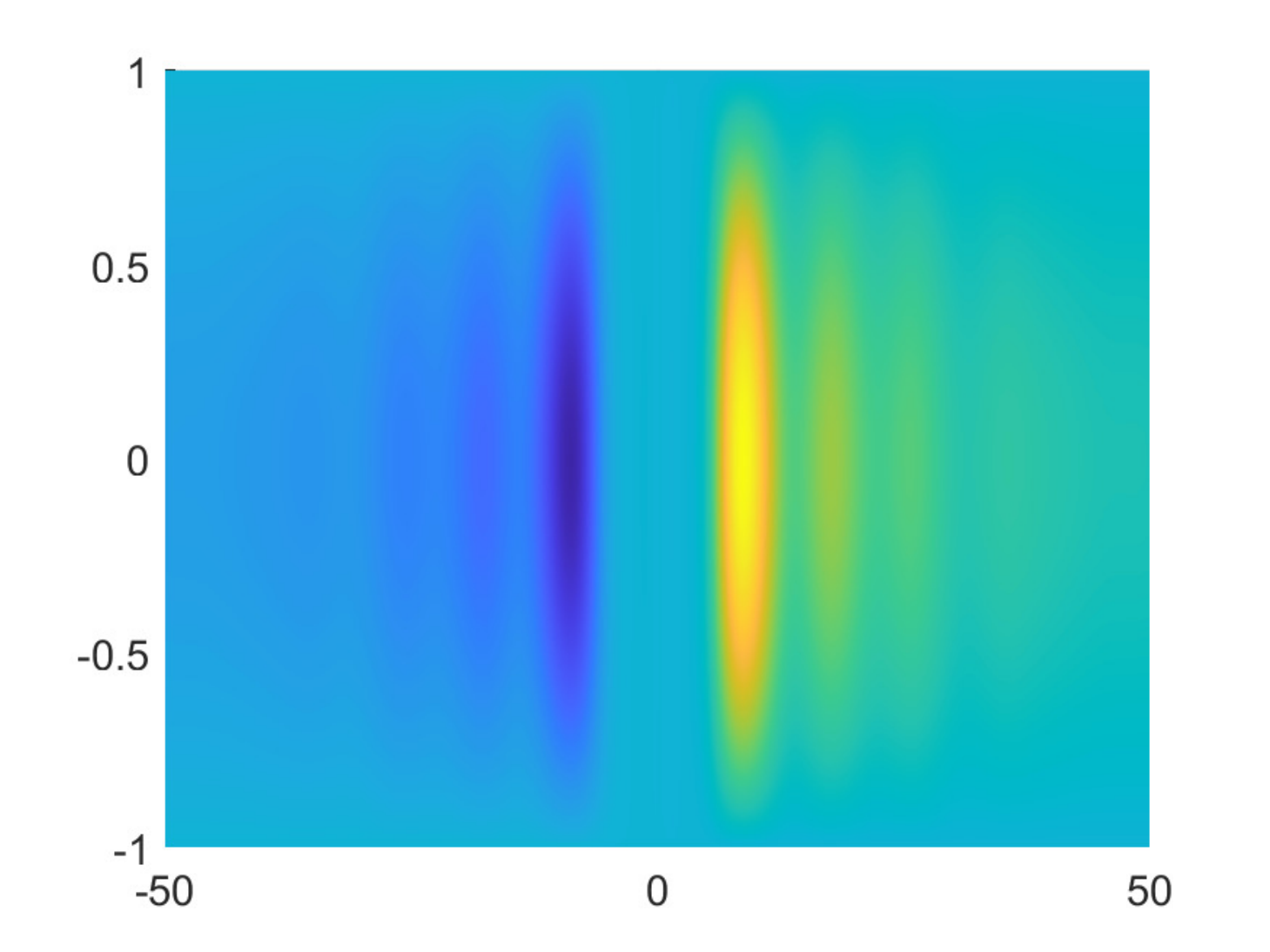}
  		\put (15,74) {$[\mathcal{E}([2.1,3]){f}](x,y)$}
   		\put (50,-1) {$x$}
   		\put (2,38) {$y$}
   \end{overpic}
 \end{minipage}
\begin{minipage}[b]{0.32\textwidth}
  \begin{overpic}[width=\textwidth,trim={0mm 0mm 0mm 0mm},clip]{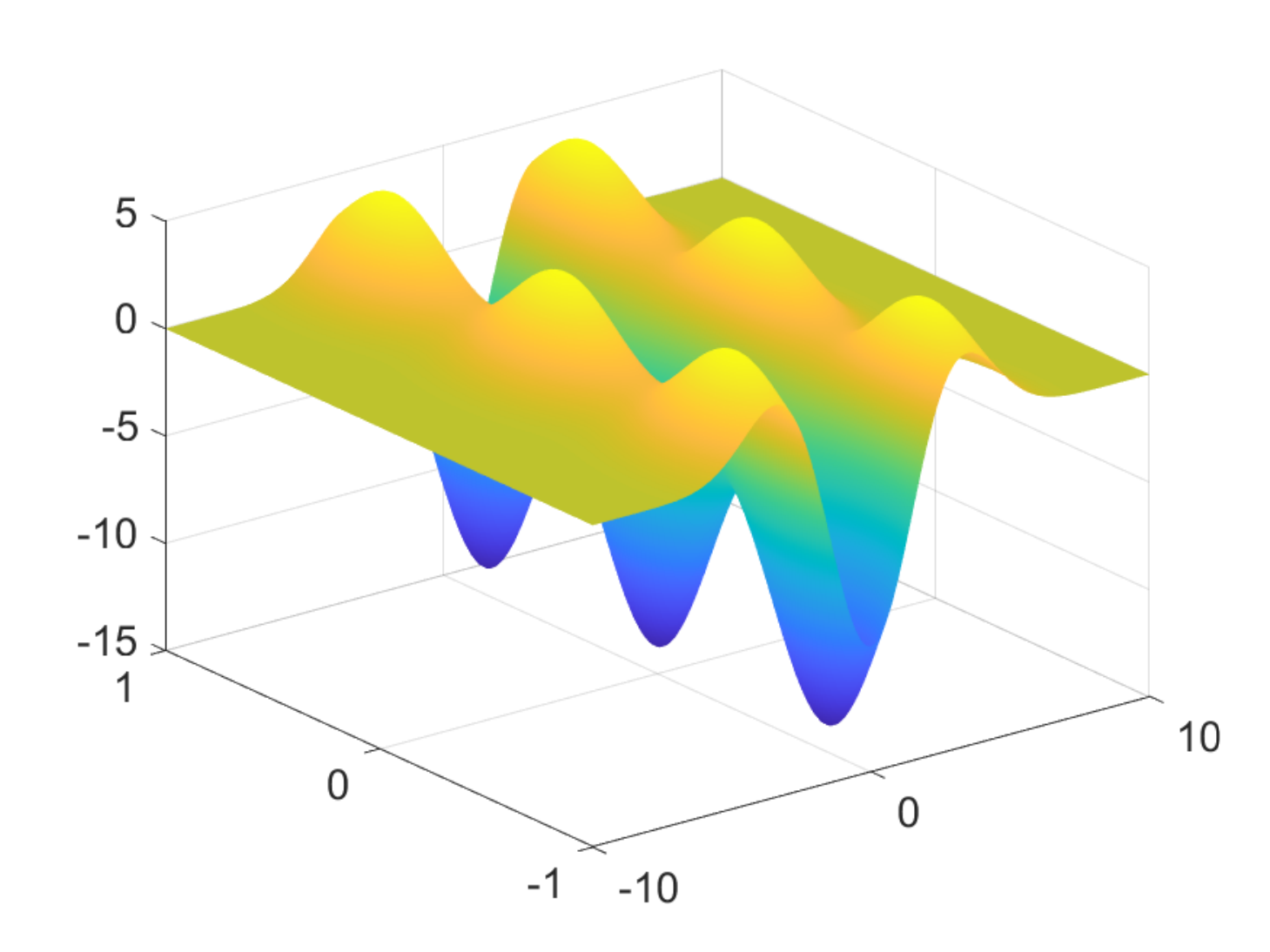}
	\put (20,74) {$v(x,y)$}
   	\put (68,4) {$x$}
  	\put (20,6) {$y$}
   \end{overpic}
 \end{minipage}
	  \caption{Left: The smoothed spectral measure, $\smash{\mu_f^{0.2}}$, of the partial integro-differential operator in~\eqref{mcolb_eqn:PIDO} computed with a $6$th order kernel. Middle: Spectral projection $\mathcal{E}([2.1,3])f$ of $f(x,y) =(1+x)(1+x^2)^{-1}\cos(\pi y/2)/\sqrt{\pi}$ associated with the third resonance peak from the left in the plot of $\mu_f^{0.2}$. Right: The potential energy landscape $v(x,y)$ for the operator in~\eqref{mcolb_eqn:PIDO}.}
\label{mcolb_fig:PIDO}
\end{figure}

The \texttt{SpecSolve} framework can also compute spectral projections $\mathcal{E}([a,b])$ associated with the projection-valued measure by omitting the inner product step in~\Cref{mcolb_alg:spec_meas} and applying endpoint corrections~\cite{Colbrook2021}. \Cref{mcolb_fig:PIDO} displays a scalar spectral measure and spectral projection for the partial integro-differential operator
\begin{equation}\label{mcolb_eqn:PIDO}\setlength\abovedisplayskip{6pt}\setlength\belowdisplayskip{6pt}
-\Delta u + v(x,y) u + \frac{1}{\pi i}\int_\mathbb{R}\frac{\exp(-x^2-y^2)}{y-x}\,u(\cdot,y)\,dy,\quad \mathcal{H}=L^2(\mathbb{R}\times[-1, 1]),
\end{equation}
and the function $f(x,y) =(1+x)(1+x^2)^{-1}\cos(\pi y/2)/\sqrt{\pi}$. The potential function is $v(x,y)$ is also plotted in \cref{mcolb_fig:PIDO}. The operator is discretized with a tensor product basis of the rational orthogonal functions $\{\rho_n\}$ and ultraspherical polynomials~\cite{Olver2013}, resulting in a sparse and banded discretization (we use basis reordering to reduce the bandwidth). In~\cref{mcolb_fig:PIDO}, narrow peaks in the scalar measure reveal scattering resonances of the partial integro-differential operator and the associated spectral projections uncover wave-packet modes that are highly concentrated within the potential well.

\section{Linear operator pencils}

For matrices $A,B\in\mathbb{C}^n$, the classical generalized eigenvalue problem is the problem of finding $v\in\mathbb{C}^n\backslash\{0\}$ and $\lambda\in\mathbb{C}$ such that $Av=\lambda Bv.$ For example, this problem arises in finite element discretizations of eigenproblems for elliptic partial differential operators, where $A$ corresponds to the ``stiffness matrix'' and $B$ corresponds to the ``mass matrix'' \cite{boffi2010finite}. Another example is linearization methods for non-linear eigenvalue problems \cite{guttel2017nonlinear}. For many applications, $A$ and $B$ are finite approximations of (possibly unbounded) operators $\mathcal{A}$ and $\mathcal{B}$ acting on a separable Hilbert space. We consider the case that $\mathcal{A}$ and $\mathcal{B}$ are both self-adjoint and that $\mathcal{B}$ is positive and invertible. We study the generalized spectral problem through the operator formally defined as $\mathcal{L}=\mathcal{B}^{-1}\mathcal{A}.$

\subsection{Recovering a self-adjoint operator}
\label{mcolb_technical_section}

It is well-known that $\mathcal{D}(\mathcal{B}^{1/2})$ is complete with respect to the norm $\|f\|_{\mathcal{B}}:=\langle \mathcal{B}^{1/2}f,\mathcal{B}^{1/2}f\rangle$ \cite[Theorem 4.4.2]{davies1996spectral}. We denote the induced Hilbert space by $\mathcal{H}_{\mathcal{B}}$. The operator $\mathcal{B}^{-1}\mathcal{A}$ with domain $\mathcal{D}(\mathcal{A})\cap\mathcal{D}(\mathcal{B}^{1/2})$ is symmetric in $\mathcal{H}_{\mathcal{B}}$.\footnote{Suppose that $f,g\in\mathcal{D}(\mathcal{A})\cap\mathcal{D}(\mathcal{B}^{1/2})$. Then $\langle\mathcal{B}^{1/2}(\mathcal{B}^{-1}\mathcal{A})g,\mathcal{B}^{1/2}f\rangle=\langle\mathcal{B}^{-1/2}\mathcal{A}g,\mathcal{B}^{1/2}f\rangle=\langle \mathcal{A}g,f\rangle. $ The first equality follows since $\mathcal{B}^{-1}\mathcal{A}g\in\mathcal{D}(\mathcal{B}^{1/2})$, whereas the second follows since $\mathcal{B}^{-1/2}$ is a bounded self-adjoint operator on $\mathcal{H}$. Similarly, we have that $\langle\mathcal{B}^{1/2}g,\mathcal{B}^{1/2}(\mathcal{B}^{-1}\mathcal{A})f\rangle=\langle g,\mathcal{A}f\rangle$.} However, to apply the spectral theorem, we need a self-adjoint operator. We assume that $\mathcal{D}(\mathcal{A})\cap\mathcal{D}(\mathcal{B}^{1/2})$ is a dense subspace of the Hilbert space $\mathcal{H}_{\mathcal{B}}$. Since $\mathcal{B}^{-1}\mathcal{A}$ is symmetric in $\mathcal{H}_{\mathcal{B}}$, it is closable. We define the symmetric closed operator
\begin{equation}\setlength\abovedisplayskip{6pt}\setlength\belowdisplayskip{6pt}
\label{mcolb_operL}
\mathcal{L}=\overline{\mathcal{B}^{-1}\mathcal{A}|_{\mathcal{D}(\mathcal{A})\cap\mathcal{D}(\mathcal{B}^{1/2})}},
\end{equation}
where the closure is performed with respect to $\mathcal{H}_{\mathcal{B}}$. This allows us to perform \textit{numerical computations} with $\mathcal{L}$ by restricting to the subspace $\mathcal{D}(\mathcal{A})\cap\mathcal{D}(\mathcal{B}^{1/2})$. To do this, we consider the inner product space $\{f:f\in\mathcal{H}\}$ with inner product $
\langle f,g \rangle_{\mathcal{B}^{-1}}=\langle \mathcal{B}^{-1/2}f,\mathcal{B}^{-1/2}g \rangle.$ We take the completion of this space, $\mathcal{H}_{\mathcal{B}^{-1}}$. $\mathcal{D}(\mathcal{B})$ is dense in $\mathcal{H}_{\mathcal{B}}$ and hence $\mathcal{B}$ can be extended to an invertible isometry from $\mathcal{H}_{\mathcal{B}}$ to $\mathcal{H}_{\mathcal{B}^{-1}}$, and $\mathcal{H}_{\mathcal{B}^{-1}}$ can be identified with the dual of $\mathcal{H}_{\mathcal{B}}$. We assume that $\mathcal{A}|_{\mathcal{D}(\mathcal{A})\cap\mathcal{D}(\mathcal{B}^{1/2})}:\mathcal{H}_{\mathcal{B}}\rightarrow \mathcal{H}_{\mathcal{B}^{-1}}$ is closable, with closure denoted by $\mathcal{A}_{\mathcal{B}}$. We can now define
\begin{equation}\setlength\abovedisplayskip{6pt}\setlength\belowdisplayskip{6pt}
\begin{split}
&\mathcal{T}(z):\mathcal{D}(\mathcal{A}_{\mathcal{B}})\rightarrow \mathcal{H}_{\mathcal{B}^{-1}}, \quad f\hookrightarrow (\mathcal{A}_{\mathcal{B}}-z\mathcal{B})f,\\
&\Lambda(\mathcal{A},\mathcal{B})=\{z\in\mathbb{C}:{\mathcal{T}(z)}\text{ does not have bounded inverse}\}.
\end{split}
\end{equation}

\begin{proposition}
\label{mcolb_needed_prop}
For any $z\in\mathbb{C}$, $\mathcal{D}(\mathcal{T}(z))=\mathcal{D}(\mathcal{L})$ and $\mathcal{T}(z)=\mathcal{B}(\mathcal{L}-z).$ Moreover, $\Lambda(\mathcal{A},\mathcal{B})=\Lambda(\mathcal{L})$ and if $z\in\mathbb{C}\setminus\Lambda(\mathcal{L})$, then $(\mathcal{L}-z)^{-1}=\mathcal{T}(z)^{-1}\mathcal{B}.$
\end{proposition}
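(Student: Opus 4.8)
The plan is to reduce every assertion to one structural fact, namely that $\mathcal{B}$ extends to a bounded, boundedly invertible isometry between $\mathcal{H}_{\mathcal{B}}$ and $\mathcal{H}_{\mathcal{B}^{-1}}$, and then to intertwine the two closures defining $\mathcal{L}$ and $\mathcal{A}_{\mathcal{B}}$ through it. Write $\mathcal{D}_0:=\mathcal{D}(\mathcal{A})\cap\mathcal{D}(\mathcal{B}^{1/2})$ for the common dense core. On $\mathcal{D}_0$ one has the pointwise identity $\mathcal{B}^{-1}(\mathcal{A}f)=(\mathcal{B}^{-1}\mathcal{A})f$, so the two operators we must compare are closures of restrictions of $\mathcal{A}$ and $\mathcal{B}^{-1}\mathcal{A}$ to the same core.

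First I would argue at the level of graphs. Consider the map $\Phi$ sending a pair $(u,v)\in\mathcal{H}_{\mathcal{B}}\times\mathcal{H}_{\mathcal{B}^{-1}}$ to $(u,\mathcal{B}^{-1}v)\in\mathcal{H}_{\mathcal{B}}\times\mathcal{H}_{\mathcal{B}}$. Since $\mathcal{B}^{-1}:\mathcal{H}_{\mathcal{B}^{-1}}\to\mathcal{H}_{\mathcal{B}}$ is an isometric isomorphism, $\Phi$ is a homeomorphism of the two product spaces and thus commutes with closure. By the pointwise identity above, $\Phi$ carries the graph of the restriction of $\mathcal{A}$ to $\mathcal{D}_0$ bijectively onto the graph of the restriction of $\mathcal{B}^{-1}\mathcal{A}$ to $\mathcal{D}_0$; taking closures, it therefore carries the graph of $\mathcal{A}_{\mathcal{B}}$ onto the graph of $\mathcal{L}$. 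Reading off the two components of this identity of graphs gives simultaneously $\mathcal{D}(\mathcal{A}_{\mathcal{B}})=\mathcal{D}(\mathcal{L})$ and $\mathcal{L}f=\mathcal{B}^{-1}\mathcal{A}_{\mathcal{B}}f$ for every $f\in\mathcal{D}(\mathcal{L})$, equivalently $\mathcal{A}_{\mathcal{B}}=\mathcal{B}\mathcal{L}$.

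The remaining claims then follow by elementary manipulations with the everywhere-defined bounded operator $\mathcal{B}$. Because $z\mathcal{B}$ is bounded and everywhere defined on $\mathcal{H}_{\mathcal{B}}$, subtracting it changes neither domain, so $\mathcal{D}(\mathcal{T}(z))=\mathcal{D}(\mathcal{A}_{\mathcal{B}})=\mathcal{D}(\mathcal{L})$; and for $f\in\mathcal{D}(\mathcal{L})$ we compute $\mathcal{T}(z)f=\mathcal{A}_{\mathcal{B}}f-z\mathcal{B}f=\mathcal{B}\mathcal{L}f-z\mathcal{B}f=\mathcal{B}(\mathcal{L}-z)f$, i.e.\ $\mathcal{T}(z)=\mathcal{B}(\mathcal{L}-z)$. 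Since $\mathcal{B}$ has a bounded inverse defined on all of $\mathcal{H}_{\mathcal{B}^{-1}}$, this factorization shows $\mathcal{T}(z)$ admits a bounded inverse precisely when $\mathcal{L}-z$ does, which is the statement $\Lambda(\mathcal{A},\mathcal{B})=\Lambda(\mathcal{L})$; and for $z\notin\Lambda(\mathcal{L})$ one inverts the factorization to obtain $\mathcal{T}(z)^{-1}=(\mathcal{L}-z)^{-1}\mathcal{B}^{-1}$, hence $(\mathcal{L}-z)^{-1}=\mathcal{T}(z)^{-1}\mathcal{B}$, a formula I would confirm by checking both composition orders using $\mathcal{B}^{-1}\mathcal{B}=\mathrm{id}$ on $\mathcal{H}_{\mathcal{B}}$.

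I expect the one genuinely delicate point to be the closure-matching in the first step: one must ensure that the $\mathcal{H}_{\mathcal{B}}\to\mathcal{H}_{\mathcal{B}^{-1}}$-closure of $\mathcal{A}$ restricted to $\mathcal{D}_0$, once composed with $\mathcal{B}^{-1}$, reproduces exactly the $\mathcal{H}_{\mathcal{B}}\to\mathcal{H}_{\mathcal{B}}$-closure of $\mathcal{B}^{-1}\mathcal{A}$ restricted to $\mathcal{D}_0$, and not a proper sub- or super-operator. The graph/homeomorphism formulation disposes of this automatically, since a topological isomorphism of the ambient product spaces sends closures to closures; the only inputs it consumes are that $\mathcal{B}^{-1}$ is an isometric isomorphism of $\mathcal{H}_{\mathcal{B}^{-1}}$ onto $\mathcal{H}_{\mathcal{B}}$ and that both $\mathcal{A}_{\mathcal{B}}$ and $\mathcal{L}$ arise as closures of restrictions to the single dense core $\mathcal{D}_0$, both guaranteed by the construction preceding the proposition.
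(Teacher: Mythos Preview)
Your argument is correct and rests on the same mechanism as the paper's proof: both use that $\mathcal{B}$ is an isometric isomorphism $\mathcal{H}_{\mathcal{B}}\to\mathcal{H}_{\mathcal{B}^{-1}}$ to match the two closures, and then read off the spectral and resolvent statements from the factorization $\mathcal{T}(z)=\mathcal{B}(\mathcal{L}-z)$. The only difference is presentational: the paper carries out the closure-matching by an explicit Cauchy-sequence chase (taking $f_n\in\mathcal{D}_0$ with $f_n\to f$ and $(\mathcal{L}-z)f_n\to(\mathcal{L}-z)f$ in $\mathcal{H}_{\mathcal{B}}$, then using $\|\mathcal{T}(z)f_n-\mathcal{T}(z)f_m\|_{\mathcal{B}^{-1}}=\|(\mathcal{L}-z)f_n-(\mathcal{L}-z)f_m\|_{\mathcal{B}}$ to push the limit through), whereas your graph-homeomorphism formulation packages exactly that computation into the single topological fact that a homeomorphism of product spaces commutes with closure. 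Your version is a bit cleaner and makes the symmetry of the two inclusions $\mathcal{D}(\mathcal{L})\subset\mathcal{D}(\mathcal{T}(z))$ and $\mathcal{D}(\mathcal{T}(z))\subset\mathcal{D}(\mathcal{L})$ automatic; the paper's version is more hands-on but requires repeating the argument in the reverse direction.
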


\begin{proof}
Let $z\in\mathbb{C}$ and $f\in\mathcal{D}(\mathcal{L})$. Then there exists $f_n\in \mathcal{D}(\mathcal{A})\cap\mathcal{D}(\mathcal{B}^{1/2})$ such that $\lim_{n\rightarrow\infty}f_n=f$ (in $\mathcal{H}_{\mathcal{B}}$) and $\lim_{n\rightarrow\infty}(\mathcal{L}-z)f_n=(\mathcal{L}-z)f$ (in $\mathcal{H}_{\mathcal{B}}$). Since $\|\mathcal{T}(z)f_n-\mathcal{T}(z)f_m\|_{\mathcal{B}^{-1}}=\|(\mathcal{L}-z)f_n-(\mathcal{L}-z)f_m\|_{\mathcal{B}},$ it follows that $\{\mathcal{T}(z)f_n\}$ is Cauchy in $\mathcal{H}_{\mathcal{B}^{-1}}$ and hence converges to some $g\in\mathcal{H}_{\mathcal{B}^{-1}}$. Since $\mathcal{T}(z)$ is closed, $f\in\mathcal{D}(\mathcal{T}(z))$ and $\mathcal{T}(z)f=g.$ Moreover, $\|(\mathcal{L}-z)f_n-\mathcal{B}^{-1}g\|_{\mathcal{B}}=\|\mathcal{T}(z)f_n-g\|_{\mathcal{B}^{-1}}$ converges to zero. Since $\mathcal{L}$ is closed, $(\mathcal{L}-z)f=\mathcal{B}^{-1}g$ and hence that $\mathcal{B}(\mathcal{L}-z)f=\mathcal{T}(z)f$. A similar argument shows that $\mathcal{D}(\mathcal{T}(z))\subset\mathcal{D}(\mathcal{L})$. Hence, $\mathcal{D}(\mathcal{T}(z))=\mathcal{D}(\mathcal{L})$ and $\mathcal{T}(z)=\mathcal{B}(\mathcal{L}-z).$ The proposition follows since $\mathcal{B}:\mathcal{H}_{\mathcal{B}}\rightarrow \mathcal{H}_{\mathcal{B}^{-1}}$ is an isometry.
\end{proof}

The following theorem that gives sufficient conditions for $\mathcal{L}$ to be self-adjoint. Common examples of these conditions include when $\mathcal{A}$ and $\mathcal{B}$ are suitable elliptic PDEs of the same differentiation order (see condition (C1)), $\mathcal{A}$ is bounded (see condition (C2)), and $\mathcal{B}$ is a suitable weight function (see condition (C3))

\begin{theorem}
\label{mcolb_some_conditions}
Consider the operators $\mathcal{A}$, $\mathcal{B}$ and $\mathcal{L}$ above. Suppose that any of the following conditions hold:
\begin{enumerate}
	\item[(C1)] There exist constants $a,b>0$ such that for any $f\in\mathcal{D}(\mathcal{A})\cap\mathcal{D}(\mathcal{B}^{1/2})$
	\begin{equation}\setlength\abovedisplayskip{6pt}\setlength\belowdisplayskip{6pt}
	\label{mcolb_bound1}
	\|\mathcal{B}^{-1/2}\mathcal{A}f\|\leq a\|f\|+b\|\mathcal{B}^{1/2}f\|.
	\end{equation}
	\item[(C2)] $\mathcal{A}$ is a relatively bounded perturbation of $\mathcal{B}$, meaning that $\mathcal{D}(\mathcal{B})\subset\mathcal{D}(\mathcal{A})$ and there exist constants $a,b>0$ such that for any $f\in\mathcal{D}(\mathcal{B})$
	\begin{equation}\setlength\abovedisplayskip{6pt}\setlength\belowdisplayskip{6pt}
	\label{mcolb_bound2}
	\|\mathcal{A}f\|\leq a\|f\|+b\|\mathcal{B}f\|.
	\end{equation}
	\item[(C3)] $\mathrm{Sp}(\mathcal{A})\neq\mathbb{R}$ and $\mathcal{B}$ is a relatively bounded perturbation of $\mathcal{A}$, meaning that $\mathcal{D}(\mathcal{A})\subset\mathcal{D}(\mathcal{B})$ and there exist constants $a,b>0$ such that for any $f\in\mathcal{D}(\mathcal{A})$
	\begin{equation}\setlength\abovedisplayskip{6pt}\setlength\belowdisplayskip{6pt}
	\label{mcolb_bound2b}
	\|\mathcal{B}f\|\leq a\|f\|+b\|\mathcal{A}f\|.
	\end{equation}
\end{enumerate}
Then $\mathcal{L}$ is self-adjoint on $\mathcal{H}_{\mathcal{B}}$. Moreover, when $\mathrm{(C1)}$ holds, $\mathcal{L}$ is bounded.
\end{theorem}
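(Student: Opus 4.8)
The plan is to use the standard criterion that a densely defined closed symmetric operator is self-adjoint precisely when its range fills the whole space for one spectral parameter in each open half-plane. Since $\mathcal{L}$ is already closed and symmetric on $\mathcal{H}_{\mathcal{B}}$, it suffices to exhibit, for some $z$ with $\mathrm{Im}(z)>0$ and some $z$ with $\mathrm{Im}(z)<0$, that $\mathrm{Ran}(\mathcal{L}-z)=\mathcal{H}_{\mathcal{B}}$. By \Cref{mcolb_needed_prop}, $\mathcal{L}-z$ is boundedly invertible if and only if $\mathcal{T}(z)=\mathcal{A}_{\mathcal{B}}-z\mathcal{B}$ is, so I would work with the pencil $\mathcal{T}(z)$ throughout and show it is bijective with bounded inverse for suitable non-real $z$. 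First I would record a lower bound uniform across the three conditions: for $f$ in the dense domain the duality pairing of $\mathcal{H}_{\mathcal{B}^{-1}}$ with $\mathcal{H}_{\mathcal{B}}$ gives $\langle\mathcal{T}(z)f,f\rangle=\langle\mathcal{A}f,f\rangle-z\|f\|_{\mathcal{B}}^2$; as $\mathcal{A}$ is symmetric the first term is real, so the imaginary part equals $-\mathrm{Im}(z)\|f\|_{\mathcal{B}}^2$, and Cauchy--Schwarz yields $\|\mathcal{T}(z)f\|_{\mathcal{B}^{-1}}\geq|\mathrm{Im}(z)|\,\|f\|_{\mathcal{B}}$. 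Hence $\mathcal{T}(z)$ is injective with closed range and a bounded inverse on that range for every non-real $z$, so the only thing left to prove in each case is surjectivity.

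Then I would dispatch the cases. For (C1) I would not even need surjectivity: rewriting the hypothesis as $\|\mathcal{L}f\|_{\mathcal{B}}=\|\mathcal{B}^{-1/2}\mathcal{A}f\|\leq a\|f\|+b\|f\|_{\mathcal{B}}$ and bounding $\|f\|\leq\|\mathcal{B}^{-1/2}\|\,\|f\|_{\mathcal{B}}$ shows $\mathcal{L}$ is bounded on its dense domain, so its closure is a bounded, everywhere-defined symmetric operator, hence self-adjoint and bounded. For (C2) I would observe that $\mathcal{A}\mathcal{B}^{-1}$ is bounded on $\mathcal{H}$ (apply the hypothesis to $\mathcal{B}^{-1}g\in\mathcal{D}(\mathcal{B})\subseteq\mathcal{D}(\mathcal{A})$), factor $\mathcal{A}-z\mathcal{B}=(\mathcal{A}\mathcal{B}^{-1}-z)\mathcal{B}$, and invert $\mathcal{A}\mathcal{B}^{-1}-z$ by a Neumann series for $|z|>\|\mathcal{A}\mathcal{B}^{-1}\|$; taking $z=\pm iN$ with $N$ large then makes the pencil a bijection of $\mathcal{D}(\mathcal{B})$ onto $\mathcal{H}$. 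For (C3) the role of $\mathrm{Sp}(\mathcal{A})\neq\mathbb{R}$ is to supply a real $\lambda_0$ in the resolvent set of $\mathcal{A}$; the hypothesis then makes $\mathcal{B}(\mathcal{A}-\lambda_0)^{-1}$ bounded, and for small non-real $z$ (with $|z|b<1$) the operator $z\mathcal{B}$ is an $\mathcal{A}$-bounded perturbation of relative bound $<1$, so that $\mathcal{A}-z\mathcal{B}$ is closed on $\mathcal{D}(\mathcal{A})$ with adjoint $\mathcal{A}-\bar z\mathcal{B}$. Since the lower bound shows $\mathcal{A}-\bar z\mathcal{B}$ is injective, the cokernel vanishes and the range is dense, hence all of $\mathcal{H}$.

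The main obstacle I anticipate is the surjectivity step and, intertwined with it, the correct matching of domains. The a priori bound is cheap, but turning ``injective with closed range'' into ``onto'' is exactly where the hypotheses must be used, and each case uses a different mechanism: boundedness of $\mathcal{A}\mathcal{B}^{-1}$; a regular point of $\mathcal{A}$ together with relative boundedness of $\mathcal{B}$; and identification of the adjoint pencil. A delicate point is that the concrete pencil $\mathcal{A}-z\mathcal{B}$ acts on $\mathcal{D}(\mathcal{A})\cap\mathcal{D}(\mathcal{B})\subseteq\mathcal{H}$, whereas the abstract $\mathcal{T}(z)$ maps $\mathcal{D}(\mathcal{L})\subseteq\mathcal{H}_{\mathcal{B}}$ into $\mathcal{H}_{\mathcal{B}^{-1}}$; I would bridge the two using the isometries $\mathcal{B}^{1/2}\colon\mathcal{H}_{\mathcal{B}}\to\mathcal{H}$ and $\mathcal{B}^{-1/2}\colon\mathcal{H}_{\mathcal{B}^{-1}}\to\mathcal{H}$ together with a density argument, checking that the closures do not enlarge the deficiency spaces. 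Verifying these closure and adjoint identifications rigorously --- rather than the soft surjectivity heuristics --- is where the real work lies.
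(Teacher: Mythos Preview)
Your overall strategy---reduce self-adjointness of $\mathcal{L}$ to surjectivity of the pencil at a pair of non-real parameters---is the same as the paper's, and your treatment of (C1) is essentially identical.  The differences appear in (C2) and (C3).  The paper handles both cases with a single tool: Kato's stability-of-bounded-invertibility theorem (Kato, Theorem~IV.1.16), applied to $T_\pm=\mathcal{A}+\kappa I\pm i\gamma\mathcal{B}$ on $\mathcal{H}$.  In (C2) one views $\mathcal{A}$ as a small perturbation of $i\gamma\mathcal{B}$ for $\gamma$ large; in (C3) one views $i\gamma\mathcal{B}$ as a small perturbation of the invertible operator $\mathcal{A}+\kappa I$ for $\gamma$ small.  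In each case the theorem immediately gives that $T_\pm$ is closed and boundedly invertible on $\mathcal{H}$, and a short density argument then shows the ranges of $\mathcal{B}^{-1}(\mathcal{A}+\kappa I)\pm i\gamma$ are dense in $\mathcal{H}_\mathcal{B}$.  Your route instead first establishes the universal lower bound $\|\mathcal{T}(z)f\|_{\mathcal{B}^{-1}}\ge|\mathrm{Im}(z)|\,\|f\|_{\mathcal{B}}$ (which the paper does not state explicitly), reducing everything to dense range, and then supplies dense range by a Neumann-series factorisation $(\mathcal{A}\mathcal{B}^{-1}-z)\mathcal{B}$ in (C2) and an adjoint/cokernel argument in (C3).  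Your (C2) argument is clean and in fact avoids the need for Kato's theorem.  Your (C3) argument is morally correct but relies on the identification $(\mathcal{A}-z\mathcal{B})^*=\mathcal{A}-\bar z\mathcal{B}$, which is true under the relative-bound hypothesis but is precisely the sort of closure/adjoint check you flag as delicate; the paper sidesteps this entirely by using the stability theorem to get invertibility directly.  In short: same destination, same overall map, but the paper uses one perturbation lemma twice where you use two different mechanisms, trading uniformity for slightly more elementary ingredients.
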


\begin{proof}
Suppose first that $(C1)$ holds. Since $\mathcal{B}^{1/2}$ is strictly positive, \eqref{mcolb_bound1} implies that there exists a positive constant $c$ such that $
\|\mathcal{B}^{-1/2}\mathcal{A}f\|\leq c\|\mathcal{B}^{1/2}f\|$ for any $f\in\mathcal{D}(\mathcal{A})\cap\mathcal{D}(\mathcal{B}^{1/2})$. This is equivalent to boundedness of $\mathcal{B}^{-1}\mathcal{A}|_{\mathcal{D}(\mathcal{A})\cap\mathcal{D}(\mathcal{B}^{1/2})}$ in the Hilbert space $\mathcal{H}_{\mathcal{B}}$, and hence $\mathcal{L}$ is bounded and self-adjoint on $\mathcal{H}_{\mathcal{B}}$.

For $(C2)$ or $(C2)$, we claim that it is enough to show that there exists some $\gamma>0$ and $\kappa\in\mathbb{R}$ such that the operators
\begin{equation}\setlength\abovedisplayskip{6pt}\setlength\belowdisplayskip{6pt}
\label{mcolb_trick1}
T_{\pm}=\mathcal{A}+\kappa I\pm i\gamma\mathcal{B},\quad \mathcal{D}(T_{\pm})=\mathcal{D}(\mathcal{A})\cap\mathcal{D}(\mathcal{B})
\end{equation}
are closable (in $\mathcal{H}$), and that their closures, denoted $\mathcal{T}_{\pm}$, are invertible (in $\mathcal{H}$). To see this, suppose that these conditions hold. Let $g\in\mathcal{D}(\mathcal{B})$ and set $f^{\pm}=T_{\pm}^{-1}\mathcal{B}g.$ Then, by definition of the closure, there exists $f_n^{\pm}\in\mathcal{D}(\mathcal{A})\cap\mathcal{D}(\mathcal{B})\subset\mathcal{D}(\mathcal{A})\cap\mathcal{D}(\mathcal{B}^{1/2})$ such that $f_n^{\pm}\rightarrow f^{\pm}$ and $T_{\pm}f_n^{\pm}\rightarrow \mathcal{B}g$ as $n\rightarrow\infty$ (with convergence in $\mathcal{H}$). Thus,
$\|\mathcal{B}^{-1}(\mathcal{A}+\kappa I)f^{\pm}_n\pm i\gamma f^{\pm}_n-g\|_{\mathcal{B}}=\|\mathcal{B}^{-1/2}\left(T_{\pm}f_n^{\pm}-\mathcal{B}g\right)\|\rightarrow 0$ as $n\rightarrow\infty.$ Since $\mathcal{D}(\mathcal{B})$ is dense in $\mathcal{H}_{\mathcal{B}}$ , it follows that the ranges of $\gamma^{-1}\mathcal{B}^{-1}(\mathcal{A}+ \kappa I)\pm iI$ are also dense in $\mathcal{H}_{\mathcal{B}}$. It follows that $\gamma^{-1}\mathcal{B}^{-1}(\mathcal{A}+\kappa I)$ is essentially self-adjoint in $\mathcal{H}_{\mathcal{B}}$ \cite[p. 257]{reed1972methods}, and hence so is $\mathcal{B}^{-1}\mathcal{A}$. This proves the claim.

Now suppose that $(C2)$ holds. Since $\mathcal{B}$ is strictly positive, \eqref{mcolb_bound2} implies that there exists a positive constant $c<1$ and $\gamma>0$ such that $\|\mathcal{A}f\|\leq c\|\gamma\mathcal{B}f\|$ for any $f\in\mathcal{D}(\mathcal{B})\subset\mathcal{D}(\mathcal{A})$. Hence $\mathcal{A}$ is a relatively bounded perturbation of $i\gamma\mathcal{B}$, with $i\gamma\mathcal{B}$-bound less than $1$. Stability of bounded invertibility \cite[Theorem IV.4.1.16]{kato2013perturbation} implies that $T_{\pm}$ in \eqref{mcolb_trick1} (with $\kappa=0$) are closed and invertible (in $\mathcal{H}$).

Finally, suppose that $(C3)$ holds. Choose $\kappa\in\mathbb{R}$ with $-\kappa\not \in \mathrm{Sp}(\mathcal{A})$ so that $\mathcal{A}+\kappa I$ is invertible, and set $\mathcal{C}=\mathcal{A}+\kappa I$. For any $f\in\mathcal{D}(\mathcal{A})$ and $\gamma>0$, \eqref{mcolb_bound2b} implies that
\begin{equation}\setlength\abovedisplayskip{6pt}\setlength\belowdisplayskip{6pt}
\label{mcolb_need_bound}
	\|\gamma\mathcal{B}f\|\leq \gamma(a+|\kappa|)\|f\|+\gamma b\|\mathcal{C}f\|.
\end{equation}
Choose $\gamma>0$ so that $\gamma(a+|\kappa|)\|\mathcal{C}^{-1}\|+\gamma b<1.$ The stability of bounded invertibility \cite[Theorem IV.4.1.16]{kato2013perturbation} and \eqref{mcolb_need_bound} imply that $T_{\pm}$ are closed and invertible.
\end{proof}

\subsection{Framework for generalized spectral measures}

To extend \texttt{SpecSolve} to the above pencil problem, we simply apply \eqref{mcolb_eqn:egn_stone} with the operator $\mathcal{L}$ defined in \eqref{mcolb_operL} and the Hilbert space $\mathcal{H}_{\mathcal{B}}$. We suppose for simplicity that $f\in\mathcal{D}(\mathcal{B})$. Using \cref{mcolb_needed_prop} and \eqref{mcolb_eqn:egn_stone} and the self-adjointness of $\mathcal{B}^{1/2}$, we have
\begin{equation}\setlength\abovedisplayskip{6pt}\setlength\belowdisplayskip{6pt}
\label{mcolb_eqn:egn_stone2}
\begin{split}
\mu_{f}^\epsilon(x)=[K_{\epsilon}*\mu_f](x)&=\frac{-1}{\pi}\sum_{j=1}^{m}{\rm Im}\left(\alpha_j\,\langle (\mathcal{B}^{1/2}\mathcal{T}(x-\epsilon a_j)^{-1}\mathcal{B}f,\mathcal{B}^{1/2}f \rangle\right)\\
&=\frac{-1}{\pi}\sum_{j=1}^{m}{\rm Im}\left(\alpha_j\,\langle (\mathcal{T}(x-\epsilon a_j)^{-1}\mathcal{B}f,\mathcal{B}f \rangle\right),
\end{split}
\end{equation}
where we use that $\mathcal{B}^{1/2}$ is self-adjoint in the second line and $\langle\cdot,\cdot\rangle$ denotes the inner product on $\mathcal{H}$. This leads to \Cref{mcolb_alg:spec_meas2}, which generalizes \Cref{mcolb_alg:spec_meas}. To apply \Cref{mcolb_alg:spec_meas2}, we only need to compute approximations of $g=\mathcal{B}f$, solve the systems $(\mathcal{A}-(x_0-\epsilon a_j)\mathcal{B})u_{j}^\epsilon=g$, and then compute inner products. We approximate $u_{j}^\epsilon$ using spectral methods and compute inner products using quadrature.

\begin{algorithm}[t]
\textbf{Input:} $\mathcal{A}$, $\mathcal{B}$, $f\in\mathcal{D}(\mathcal{B})$, $x_0\in\mathbb{R}$, $a_1,\dots,a_m\in\{z\in\mathbb{C}:{\rm Im}(z)>0\}$, and $\epsilon>0$. \\
\vspace{-4mm}
\begin{algorithmic}[1]
\STATE Compute $g=\mathcal{B}f$.
\STATE Solve the Vandermonde system \eqref{mcolb_eqn:vandermonde_condition} for the residues $\alpha_1,\dots,\alpha_m\in\mathbb{C}$.
	\STATE Solve $(\mathcal{A}-(x_0-\epsilon a_j)\mathcal{B})u_{j}^\epsilon=g$ for $1\leq j\leq m$.
	\STATE Compute $\smash{\mu_{f}^\epsilon(x_0)=\frac{-1}{\pi}\mathrm{Im}\left(\sum_{j=1}^m\alpha_j\langle u_{j}^\epsilon,g\rangle\right)}$.
\end{algorithmic} \textbf{Output:} The approximate spectral measure $\mu_{f}^\epsilon(x_0)$.
\caption{A computational framework for evaluating an approximate spectral measure of an operator $\mathcal{L}$ in \eqref{mcolb_operL} corresponding to the pencil $\mathcal{A}-\lambda\mathcal{B}$ at $x_0\in\mathbb{R}$ with respect to a vector $f\in\mathcal{D}(\mathcal{B})$.}\label{mcolb_alg:spec_meas2}
\end{algorithm}

\subsection{Examples}

We now present two examples, using Fourier spectral methods and a spectral element method, respectively. Both examples fall into the setup of \Cref{mcolb_some_conditions}.

\vspace{1mm}

\noindent\textbf{Pseudo-differential operators and internal waves:} Spectral properties of 0th order pseudo-differential operators arise naturally in fluid mechanics \cite{Ralston73} and pseudoparabolic equations \cite{showalter1970pseudoparabolic}. See \cite{CS-L20,CdV19} for the study of internal waves and \cite{Zworski1,Zworski2} for connections with scattering resonances. As a simple example, we consider
$$\setlength\abovedisplayskip{6pt}\setlength\belowdisplayskip{6pt}
\mathcal{A} = -i (1+\cos(x)/2)\partial_y, \quad  \mathcal{B} = (1-\partial_y^2)^{1/2},\quad x,y\in[-\pi,\pi]_{\mathrm{per}},
$$
where the initial Hilbert space is $\mathcal{H}=L^2([-\pi,\pi]_{\mathrm{per}}^2)$. To solve the linear systems in \Cref{mcolb_alg:spec_meas2}, we use the standard tensor product Fourier basis.

\Cref{mcolb_fig:fourier} (left) shows the smoothed spectral measures computed using $\epsilon=0.01$, and the first and sixth-order kernels for $f(x,y)=C\exp(\sin(x+y))/(2+\cos(y))$, where $C$ is a normalization constant so that $\mu_f$ is a probability measure. The spectral measure has an absolutely continuous component (with piecewise continuous Radon--Nikodym derivative), and an eigenvalue at $0$. The higher order kernel ($m=6$) provides a better localization of the singular part of the spectral measure at $0$, and also a better resolution of jumps in the Radon--Nikodym derivative (see zoomed-in section). \Cref{mcolb_fig:fourier} (right) shows the pointwise convergence to the Radon--Nikodym derivative and the expected rates of convergence for $m=2,4$ and $6$.

\begin{figure}[!tbp]
 \centering
 \begin{minipage}[b]{0.49\textwidth}
  \begin{overpic}[width=\textwidth,trim={0mm 0mm 0mm 0mm},clip]{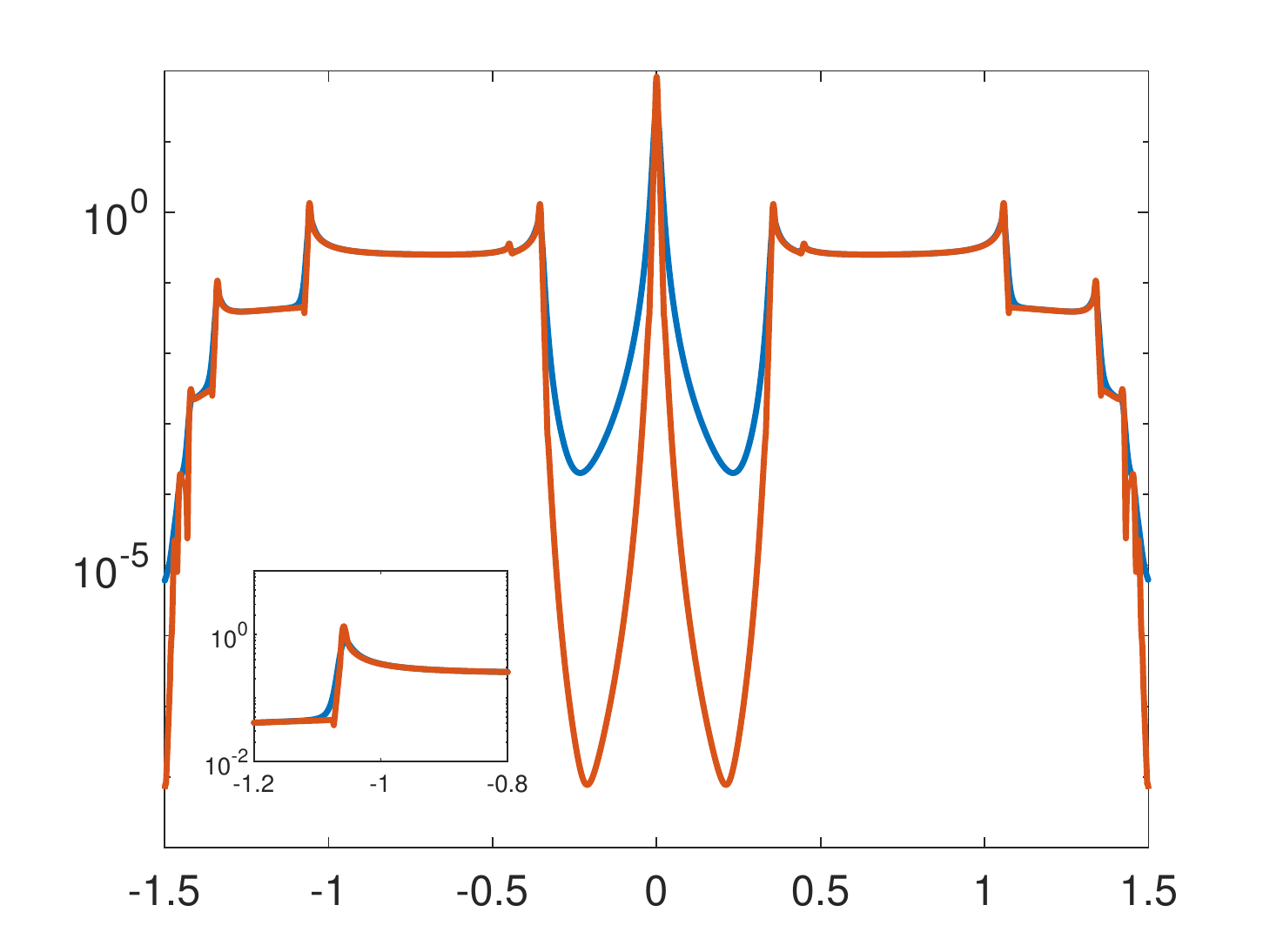}
		\put (45,73) {$\mu_{f}^{0.01}(x)$}
   \put (50,-1) {$x$}
	\put(58,65){\vector(-1,0){6}}
		\put(58,64){\small{}eigenvalue}
		\put(29,40){$\rho_f$}
		\put(30,44){\vector(0,11){10}}
		\put (71.5,21.5) { {$m=1$}}
	\put(71.5,23.5)  {\vector(-1,1){14}}
	\put (71.5,11.5) { {$m=6$}}
	\put(71.5,13.5)  {\vector(-1,0){14}}
   \end{overpic}
 \end{minipage}
	\begin{minipage}[b]{0.49\textwidth}
  \begin{overpic}[width=\textwidth,trim={0mm 0mm 0mm 0mm},clip]{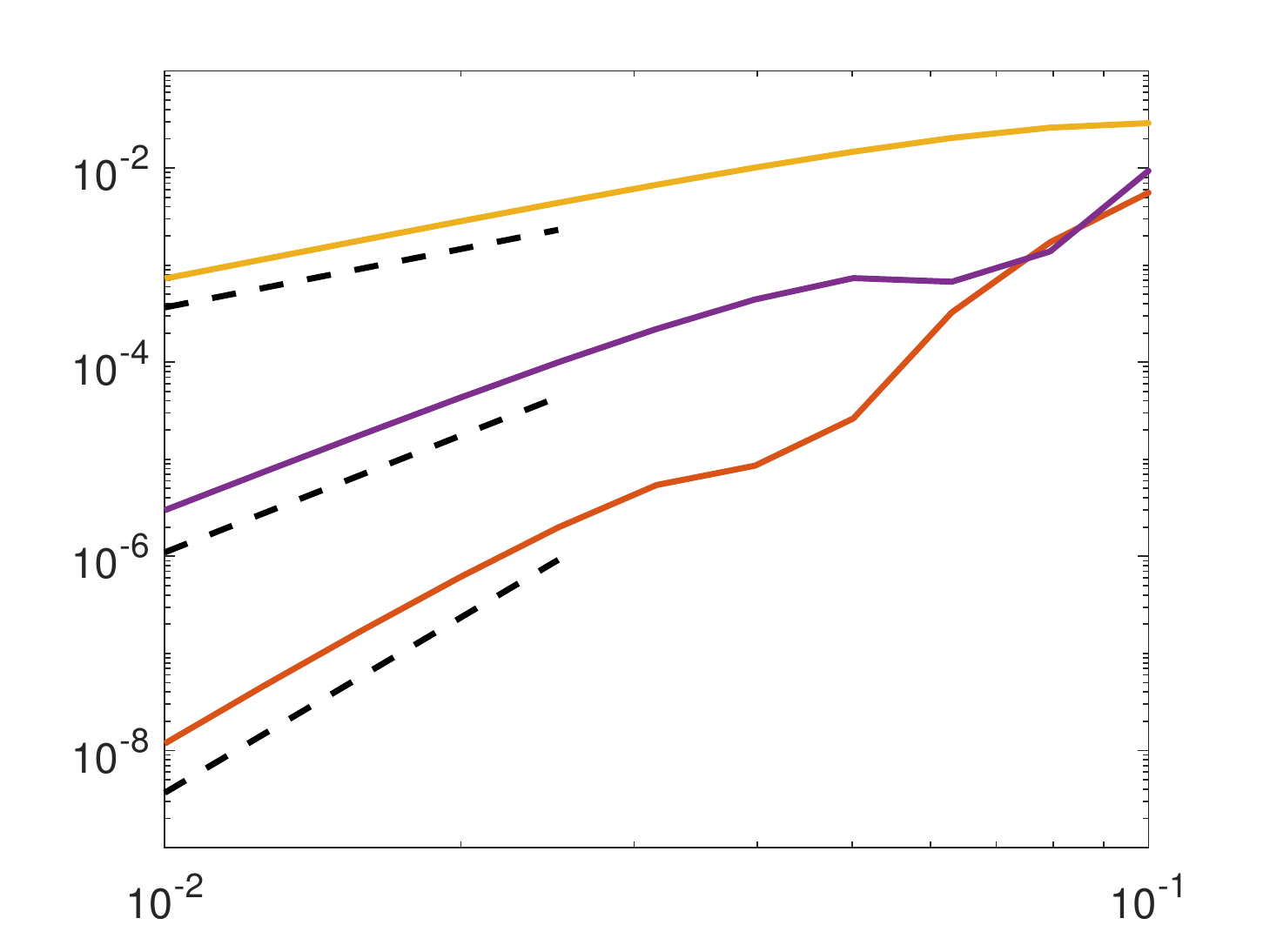}
	\put (13,73) {$|\mu_{f}^{\epsilon}(-0.9)-\rho_f(-0.9)|/\rho_f(-0.9)$}
   \put (50,-1) {$\epsilon$}
	\put (14,56) {\rotatebox{11} {$\displaystyle m=2$}}
	\put (14,37) {\rotatebox{23} {$\displaystyle m=4$}}
	\put (14,20) {\rotatebox{30} {$\displaystyle m=6$}}
   \end{overpic}
 \end{minipage}
	  \caption{Left: Smoothed spectral measures, $\smash{\mu_{f}^{0.01}}$, computed using the 1st and 6th order kernels. The zoomed-in section shows better resolution of jump discontinuities in $\rho_f$ for larger $m$. Right: Relative pointwise convergence to $\rho_f$ and expected rates shown as dashed black lines.}
\label{mcolb_fig:fourier}
\end{figure}

\vspace{1mm}

\noindent\textbf{Elliptic differential operator preconditioners:} A common use of $\mathcal{L}$ in \eqref{mcolb_operL} is preconditioning, where $\mathcal{B}$ is a preconditioner of $\mathcal{A}$ \cite{malek2014preconditioning}. For example, sometimes one can prove mesh-independent bounds on condition numbers for methods such as finite elements \cite{mardal2011preconditioning}, which are useful for applying Krylov space methods. The papers \cite{gergelits2019laplacian,gergelits2020generalized} discuss the spectrum of $\mathcal{L}$ in this context. The spectral measure of $\mathcal{L}$ and its discretizations determine the behavior of Krylov subspace methods. See \cite[Section 2]{gergelits2019laplacian} for a instructive example for which the spectrum is not enough.

\begin{figure}[!tbp]
 \centering
 \begin{minipage}[b]{0.5\textwidth}
  \begin{overpic}[width=\textwidth,trim={0mm 0mm 0mm 0mm},clip]{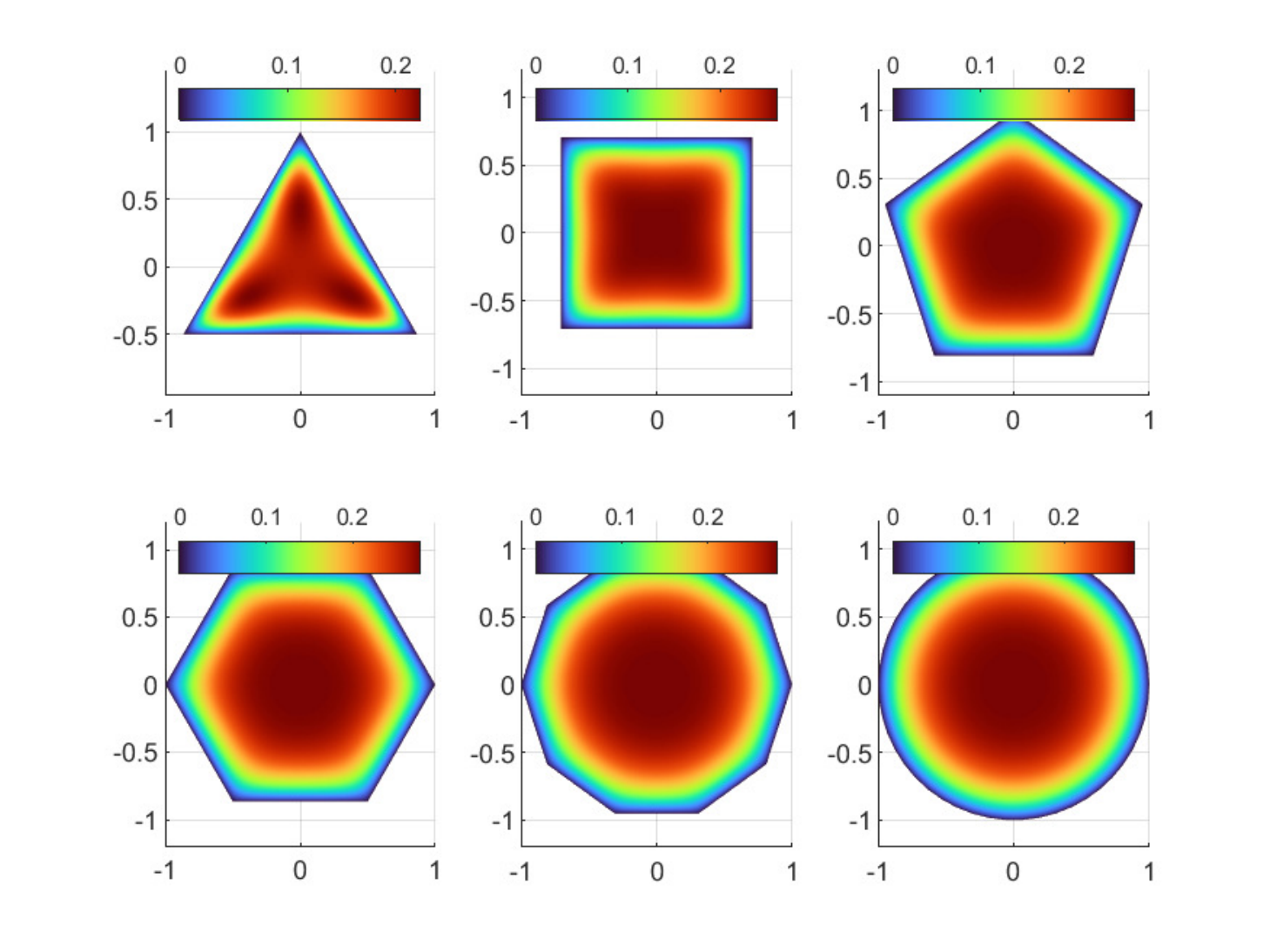}
		\put (12,73) {\small$n=3$}
		\put (40,73) {\small$n=4$}
		\put (69,73) {\small$n=5$}
		
		\put (12,36) {\small$n=6$}
		\put (40,36) {\small$n=10$}
		\put (69,36) {\small$n=\infty$}
      \end{overpic}
 \end{minipage}
	\begin{minipage}[b]{0.49\textwidth}
  \begin{overpic}[width=\textwidth,trim={0mm 0mm 0mm 0mm},clip]{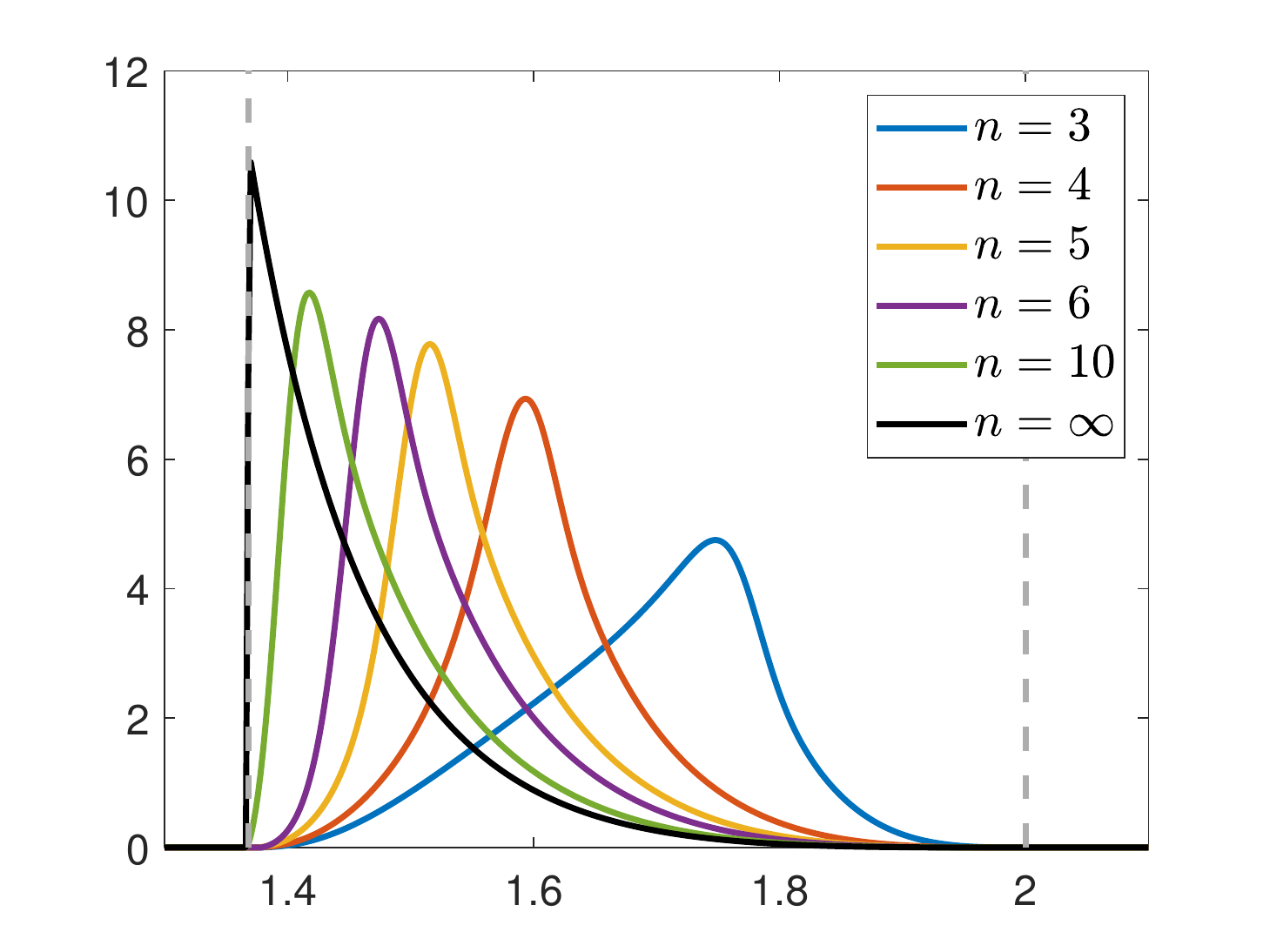}
\put (45,73) {$\mu_{f}^{\epsilon}(x)$}
   \put (50,-1) {$x$}
   \end{overpic}
 \end{minipage}
	  \caption{Left: Functions $f$ for different $n$. Right: Smoothed spectral measures, $\smash{\mu_{f}^{\epsilon}}$, for different $n$-gons computed using the 6th order kernel. For $n<\infty$ we use $\epsilon=0.05$ and for the circle we use $\epsilon=0.001$. The dashed vertical lines are the endpoints of the spectrum.}
\label{mcolb_fig:ultraSEM}
\end{figure}

We follow \cite{dirichlet_precond} and consider a bounded Lipschitz domain $\Omega\subset\mathbb{R}^2$. We take
$$\setlength\abovedisplayskip{6pt}\setlength\belowdisplayskip{6pt}
\mathcal{A}u=-\nabla\cdot[(1+\exp(-x^2-y^2))\nabla u],\quad \mathcal{B}u=-\nabla^2 u,
$$
both with zero Dirichlet boundary conditions. The spectrum of $\mathcal{L}$ is the interval \cite{dirichlet_precond} $
\Lambda(\mathcal{L})=[\inf_{(x,y)\in\Omega}1+\exp(-x^2-y^2),\sup_{(x,y)\in\Omega}1+\exp(-x^2-y^2)\big],$ but the spectral measure is unknown. To solve the linear systems in \Cref{mcolb_alg:spec_meas2}, we use the ($hp$-adaptive and sparse) ultraspherical spectral element method \cite{ultraSEM}.

We take $\Omega$ to be a regular $n$-gon and set $f=C(\Omega)\mathcal{B}^{-1}g$, where $g(x,y)=x^2+y^2$ and $C(\Omega)$ are normalization constants so that each $\mu_f$ is a probability measure. \Cref{mcolb_fig:ultraSEM} (left) shows these $f$ and \cref{mcolb_fig:ultraSEM} (right) shows the smoothed spectral measures. The endpoints of the spectrum are shown as vertical dashed lines. The measures appear to be absolutely continuous and converge to the corresponding measure for the disk ($n=\infty$) as $n$ gets larger. To deal with the disk, we use separation of variables and solve the resulting radial ODEs using the ultraspherical spectral method \cite{Olver2013}.

\begin{acknowledgement}
MJC is supported by a Research Fellowship at Trinity College, Cambridge, and a Fondation Sciences Mathématiques de Paris Postdoctoral Fellowship at École Normale Supérieure. We thank Alex Townsend for pointing out that separation of variables efficiently deals with the $n=\infty$ case in \cref{mcolb_fig:ultraSEM} and for reading a draft version of the article. We thank Zdenek Strakos for discussions on the preconditioner example and for reading a draft version of the article.
\end{acknowledgement}

\end{document}